\def\numberlikeadb{\global\def\theequation{\thesection.\arabic{equation}}}
\newtheorem{theorem}{Theorem}[section]
\newtheorem{lemma}[theorem]{Lemma}
\newtheorem{definition}[theorem]{Definition}
\newtheorem{proposition}[theorem]{Proposition}
\begin{document}

\title{A Probabilistic proof of some integral formulas involving the Meijer $G$-function}
\author{Robert E. Gaunt\footnote{Department of Statistics,
University of Oxford, 24--29 St$.$ Giles, Oxford OX1 3LB, UK}
}

\date{September 2016} 
\maketitle

\begin{abstract}New integral formulas involving the Meijer $G$-function are derived using recent results concerning distributional characterisations and distributional transformations in probability theory. 
\end{abstract}

\noindent{{\bf{Keywords:}}} Meijer $G$-function, integration, distributional transformation, Stein's method

\noindent{{{\bf{AMS 2010 Subject Classification:}}} Primary 33C60; secondary 60E10 

\section{Introduction and main results}

The Meijer $G$-function is a very general function which includes many simpler special functions as special cases.  The Meijer $G$-function is defined by the contour integral:
\[G^{m,n}_{p,q}\bigg(z \; \bigg|\; {a_1,\ldots, a_p \atop b_1,\ldots,b_q} \bigg)=\frac{1}{2\pi i}\int_{c-i\infty}^{c+i\infty}z^{-s}\frac{\prod_{j=1}^m\Gamma(s+b_j)\prod_{j=1}^n\Gamma(1-a_j-s)}{\prod_{j=n+1}^p\Gamma(s+a_j)\prod_{j=m+1}^q\Gamma(1-b_j-s)}\,\mathrm{d}s,\]
where $c$ is a real constant defining a Bromwich path separating the poles of $F(s + b_j)$ from those of $F(1- a_j- s)$ and where we use the convention that the empty product is $1$.  A more detailed discussion of the Meijer $G$-function and examples are given in \cite{erdelyi}, pp. 206--222; see also \cite{olver} and references therein.  

In this paper, we derive new integral formulas involving the Meijer $G$-function.  We prove these results using a probabilistic approach, using recent results from the theory of distributional characterisations and distributional transformations in probability theory that are given \cite{gaunt ngb}.  Our main result is as follows. 

\begin{theorem}\label{thmmeijerg}Let $n$ be a positive integer and suppose that  $a_1,\ldots,a_n>-1$.  Then, for all $x>0$,
\begin{equation}\label{meigint1}G_{0,n}^{n,0}(x\,|\,a_1,\ldots,a_n)=\int_x^{\infty}G_{n,n}^{n,0}\bigg( \frac{x}{t}\; \bigg| \; \begin{matrix} a_1+1,\ldots, a_n+1 \\
a_1,\ldots,a_n \end{matrix} \bigg)G_{0,n}^{n,0}(t\,|\,a_1,\ldots,a_n)\,\mathrm{d}t.
\end{equation}
If $a_1,\ldots,a_n$ are distinct, then (\ref{meigint1}) simplifies to 
\begin{equation*}
G_{0,n}^{n,0}(x\,|\,a_1,\ldots,a_n)=\sum_{k=1}^n\bigg(\prod_{j=k}^n\frac{1}{a_j-a_k}\bigg)\int_x^{\infty}\bigg(\frac{x}{t}\bigg)^{a_k}G_{0,n}^{n,0}(t\,|\,a_1,\ldots,a_n)\,\mathrm{d}t,
\end{equation*}
and if $a_1=\cdots=a_n=a$, then (\ref{meigint1}) simplifies to 
\begin{equation}\label{lastone}G_{0,n}^{n,0}(x\,|\,a,\ldots,a)=\frac{1}{(n-1)!}\int_x^{\infty}\bigg(\frac{x}{t}\bigg)^{a}\bigg[\log\bigg(\frac{t}{x}\bigg)\bigg]^{n-1}G_{0,n}^{n,0}(t\,|\,a,\ldots,a)\,\mathrm{d}t.
\end{equation}
\end{theorem}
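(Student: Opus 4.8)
The plan is to recognise, following \cite{gaunt ngb}, that after normalisation the Meijer $G$-functions in \eqref{meigint1} are probability densities, and that the formula is the "density form" of an elementary distributional identity. Write $K(z)=G_{n,n}^{n,0}\bigg(z\,\bigg|\,\begin{matrix}a_1+1,\ldots,a_n+1\\ a_1,\ldots,a_n\end{matrix}\bigg)$ for the kernel in \eqref{meigint1}. Two preliminary identifications are needed. First, if $Y_1,\ldots,Y_n$ are independent with $Y_j\sim\Gamma(a_j+1,1)$ (possible precisely because $a_j>-1$), then $W:=Y_1\cdots Y_n$ has Mellin transform $\mathbb{E}[W^{s-1}]=\prod_{j=1}^n\Gamma(s+a_j)/\Gamma(a_j+1)$, so by Mellin--Barnes inversion its density is $f_W(x)=\big(\prod_{j=1}^n\Gamma(a_j+1)\big)^{-1}G_{0,n}^{n,0}(x\,|\,a_1,\ldots,a_n)$. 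Second, if $B_1,\ldots,B_n$ are independent with $B_j\sim\mathrm{Beta}(a_j+1,1)$, i.e.\ with density $(a_j+1)b^{a_j}$ on $(0,1)$, then $B:=B_1\cdots B_n$ is supported on $(0,1)$ and has Mellin transform $\mathbb{E}[B^{s-1}]=\prod_{j=1}^n(a_j+1)/(s+a_j)$; using $\Gamma(s+a_j+1)=(s+a_j)\Gamma(s+a_j)$ this identifies its density as $f_B(z)=\big(\prod_{j=1}^n(a_j+1)\big)K(z)$. The first two steps of the proof are to establish these identifications, taking care over the convergence of the contour integrals and, for $K$, over the fact that the inversion collects only the poles at $s=-a_j$ (so that $K$ is supported on $(0,1)$).

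The heart of the proof is the distributional identity $W\stackrel{d}{=}B\,W^{*}$, where $W^{*}$ denotes the size-biased version of $W$ and $B$ is taken independent of $W^{*}$. This follows immediately from the beta--gamma algebra $\Gamma(\alpha,1)\stackrel{d}{=}\mathrm{Beta}(\alpha,1)\cdot\Gamma(\alpha+1,1)$ (independent factors), applied with $\alpha=a_j+1$: it yields $Y_j\stackrel{d}{=}B_jZ_j$ with $Z_j\sim\Gamma(a_j+2,1)$ independent of $B_j$, hence $W\stackrel{d}{=}B\cdot(Z_1\cdots Z_n)$; and since size-biasing a gamma increments its shape by one and commutes with products of independent variables, $Z_1\cdots Z_n\stackrel{d}{=}W^{*}$. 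I would quote this identity (or the general distributional transformation of which it is an instance) from \cite{gaunt ngb} rather than reprove it.

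The third step is to pass from $W\stackrel{d}{=}BW^{*}$ to densities. Since $B$ and $W^{*}$ are independent and $f_{W^{*}}(t)=tf_W(t)/\mathbb{E}W$ with $\mathbb{E}W=\prod_{j=1}^n(a_j+1)$, the multiplicative convolution formula gives
\[
f_W(x)=\int_0^\infty\frac1t\,f_B\!\Big(\frac xt\Big)f_{W^{*}}(t)\,\mathrm{d}t=\frac{1}{\prod_{j=1}^n(a_j+1)}\int_x^\infty f_B\!\Big(\frac xt\Big)f_W(t)\,\mathrm{d}t,
\]
the lower limit becoming $x$ because $f_B$ vanishes outside $(0,1)$. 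Substituting the two identifications of the previous paragraph, the constants $\prod\Gamma(a_j+1)$ and $\prod(a_j+1)$ cancel and \eqref{meigint1} drops out. For the two simplifications I would evaluate $K(z)=\tfrac{1}{2\pi i}\int z^{-s}\prod_{j=1}^n(s+a_j)^{-1}\,\mathrm{d}s$ by residues: when the $a_j$ are distinct a partial-fraction expansion of $\prod_j(s+a_j)^{-1}$ inverts term by term to $\sum_{k=1}^n\big(\prod_{j\neq k}(a_j-a_k)^{-1}\big)z^{a_k}$ on $(0,1)$, while when $a_1=\cdots=a_n=a$ the single pole of order $n$ contributes $\frac{1}{(n-1)!}z^{a}\big(\log\tfrac1z\big)^{n-1}$ on $(0,1)$; inserting $z=x/t$ into \eqref{meigint1} then gives the stated forms.

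I expect the main obstacle to be analytic bookkeeping rather than anything conceptual: justifying the Mellin--Barnes inversions and contour shifts for all $a_j>-1$, verifying the support and normalisation of $f_B$, and legitimising the interchange of integration in the convolution step. The only place where a little extra care is required is the confluent case $a_1=\cdots=a_n=a$, where one must extract the residue at a pole of order $n$ and hence the logarithmic factor in \eqref{lastone}; everything else reduces to beta--gamma algebra and size-biasing.
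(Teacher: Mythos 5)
Your proposal is correct, but it reaches (\ref{meigint1}) by a genuinely more direct route than the paper. The paper's proof runs through the Stein-characterisation machinery: the product-gamma characterising equation (\ref{zxcracker11}) motivates the gamma bias transformation of order $n$ (Definition \ref{def1}), whose existence and uniqueness is proved via the inverse operator $H_{r_1,\ldots,r_n}$ and the Riesz representation theorem; the transformation is then identified as $W^{G(n)}\stackrel{\mathcal{D}}{=}V_nW^s$ (Proposition \ref{zxdoublesquareg}), its density is computed from the Meijer $G$ (respectively closed-form) densities of the beta product $V_n$ (Lemma \ref{zxurklem}, Proposition \ref{zxjohn hates}), and the theorem follows because $\mathrm{PG}(r_1,\ldots,r_n,1)$ is the unique fixed point (Lemma \ref{lemma1}). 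You instead prove the fixed-point identity $W\stackrel{d}{=}BW^{*}$ for $W$ a product of independent $\Gamma(a_j+1,1)$ variables directly from the beta--gamma algebra $\Gamma(\alpha,1)\stackrel{d}{=}\mathrm{Beta}(\alpha,1)\cdot\Gamma(\alpha+1,1)$ together with the elementary facts that size-biasing a gamma increments its shape and commutes with independent products, identify the two densities by Mellin inversion, and obtain the simplified kernels by residue calculus (partial fractions in the distinct case, an order-$n$ pole in the confluent case). This sidesteps the Stein operator $B_{r_1,\ldots,r_n}$, the operator $H$, and the Riesz argument entirely, at the cost of some Mellin--Barnes bookkeeping that the paper avoids by quoting the known beta-product density formulas; what the paper's longer route buys is the gamma bias transformation itself as a reusable object (intended for product-gamma approximation theorems), of which your identity $W\stackrel{d}{=}BW^{*}$ is exactly the specialisation of Proposition \ref{zxdoublesquareg} combined with Lemma \ref{lemma1} at the fixed point. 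One small point in your favour: your residue computation gives the product over $j\neq k$ in the distinct-parameter formula, which agrees with (\ref{chrbox2}) and shows that the index range $j=k,\ldots,n$ in the theorem's displayed statement is a misprint.
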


The result that the product of two arbitrary Meijer $G$-functions integrated over the positive real line can itself be represented as a Meijer $G$-function (the convolution theorem; see \cite{luke}, Section 5.6) is of fundamental importance.  As noted by \cite{beals}, this result lies at the heart of most comprehensive tables of integrals in print \cite{intser}.  Theorem 1.1 complements this result by giving a class of integral formulas for the product of two Meijer $G$-functions integrated over a positive half line in which the integral is itself a Meijer $G$-function.

The rest of this article is organised as follows.  In Section 2, we present some preliminary results from probability theory that we will use to prove Theorem \ref{thmmeijerg}.  In particular, we state a useful characterising equation for the product of $n$ independent gamma random variables and introduce an associated distributional transformation.  In Section 3, we establish some properties of this distributional transformation.  In Section 4,   we use these properties to prove Theorem \ref{thmmeijerg}.  We conclude by noting that the approach used in this paper to prove Theorem \ref{thmmeijerg} could in principle be used to prove other integral formulas involving special functions.

\section{Preliminary results from probability theory}

In this section, we introduce the results from probability theory that are required in our proof of Theorem \ref{thmmeijerg}.

\subsection{Products of random variables}

One of the ways the Meijer $G$-function enters probability theory is through the study of products of independent random variables.  It was shown by  \cite{springer} that probability density functions of products of independent beta, gamma and central normal random variables are Meijer $G$-functions. 
The density function of the product of $n$ independent standard normal random  variables with density $\frac{1}{\sqrt{2\pi}}\mathrm{e}^{-x^2/2}$, $x\in\mathbb{R}$, is given by
\begin{equation}\label{MeijerN} p(x)=\frac{1}{(2\pi)^{n/2}}G_{0,n}^{n,0}\bigg(\frac{x^2}{2^n} \; \bigg| \;0, \ldots,0\bigg), \quad x\in\mathbb{R}.
\end{equation}
A random variable with density (\ref{MeijerN}) has \emph{product normal} distribution with variance 1, denoted by $\mathrm{PN}(n,1)$.  The density of the product of $n$ independent gamma  random variables with density $\frac{\lambda^{r_i}}{\Gamma(r_i)}x^{r_i-1}\mathrm{e}^{-\lambda x}$, $x>0$, $\lambda>0$, $r_i>0$, $i=1,\ldots,n$, (denoted by $\mathrm{Gamma}(r_i,\lambda)$) is given by
\begin{equation}\label{MeijerGamma} p(x)= \frac{\lambda^n}{\prod_{j=1}^n\Gamma(r_j)} G_{0,n}^{n,0}( \lambda^nx \, | \, r_1-1,\ldots, r_n-1), \quad x>0,
\end{equation}
and a random variable with density (\ref{MeijerGamma}) is said to have a \emph{product gamma} distribution, which we denote by $\mathrm{PG}(r_1,\ldots,r_n,\lambda)$.  In this paper, for simplicity, we take $\lambda=1$.  Finally, the density of the product of $n$ independent beta $\mathrm{Beta}(a_i,b_i)$ random variables with density $\frac{\Gamma(a_i+b_i)}{\Gamma(a_i)\Gamma(b_i)}x^{a_i-1}(1-x)^{b_i-1}$, $0<x<1$, $a_i,b_i>0$, $i=1,\ldots,n$, (denoted by $\mathrm{Beta}(a_i,b_i)$) is given by
\begin{equation}\label{zxMeijerBC} p(x)=\bigg(\prod_{i=1}^m\frac{\Gamma(a_i+b_i)}{\Gamma(a_i)}\bigg)G_{n,n}^{n,0}\bigg(x\; \bigg| \; \begin{matrix} a_1+b_1-1,\ldots, a_n+b_n-1 \\
a_1-1,\ldots,a_n-1\end{matrix} \bigg), \quad 0<x<1.
\end{equation}

\subsection{Stein characterisations}

Recently, the products of independent beta, gamma and central normal random variables have received attention \cite{gaunt pn, gaunt ngb} 
in the context of the probabilistic technique Stein's method, introduced in 1972 by Stein \cite{stein}.  In the works \cite{gaunt pn} and \cite{gaunt ngb}, so-called Stein characterisations were obtained for products of independent beta, gamma and central normal random variables, and the Stein characterisations of products of gammas and normals will be of particular interest to us in this paper.  

Before presenting these characterising equations, we introduce some notation.  For $r\in\mathbb{R}$, we define the operator $T_r$ by $T_rf(x)=xf'(x)+rf(x)$ and we let $D$ denote the usual differential operator.  Also, let $B_{r_1,\ldots,r_n}$ denote the iterated operator $T_{r_1}\cdots T_{r_n}$.  Then we have the following characterising equations of the product normal (see \cite{gaunt pn}, Proposition 2.3) and product gamma distributions (see \cite{gaunt ngb}, Proposition 2.3):

\begin{proposition}Suppose $Z\sim\mathrm{PN}(n,\sigma^2)$.  Let $f\in C^n(\mathbb{R})$ be such that $\mathbb{E}|Zf(Z)|<\infty$ and $\mathbb{E}|Z^{k-1}f^{(k)}(Z)|<\infty$, $k=1,\ldots,n$.  Then
\begin{equation}\label{cracker} \mathbb{E}[\mathcal{A}_Zf(Z)]=0,
\end{equation}
where $\mathcal{A}_Zf(x)=DT_0^{n-1}f(x)-xf(x)$ and we set $T_0^0f(x)=f(x)$.

Suppose now that $Y\sim\mathrm{PG}(r_1,\ldots,r_n,1)$.  Let $f\in C^n(\mathbb{R}^+)$ be such that $\mathbb{E}|Yf(Y)|<\infty$ and $\mathbb{E}|Y^{k}f^{(k)}(Y)|<\infty$, $k=0,\ldots,n$, where $f^{(0)}\equiv f$.  Then
\begin{equation}\label{zxcracker11} \mathbb{E}[\mathcal{A}_Yf(Y)]=0,
\end{equation}
where $\mathcal{A}_Yf(x)=B_{r_1,\ldots,r_n}f(x)-xf(x)$.
\end{proposition}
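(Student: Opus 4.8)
The plan is to prove both identities by induction on the number $n$ of factors in the product, peeling off one factor at a time, with the classical Stein characterisations of the $\mathrm{Gamma}(r,1)$ and $N(0,1)$ laws as the two base cases. The engine of the induction is a \emph{scaling--commutation} property of the building-block operators: writing $f_c(x):=f(cx)$ for a constant $c>0$, one has $T_r[f_c](x)=(T_rf)(cx)$, so $T_r$ (and likewise $T_0$) intertwines with the dilation $x\mapsto cx$; moreover, since $T_r=T_0+r\,\mathrm{Id}$, the operators $T_{r_1},\dots,T_{r_n}$ commute pairwise, so $B_{r_1,\dots,r_n}=T_{r_n}B_{r_1,\dots,r_{n-1}}=B_{r_1,\dots,r_{n-1}}T_{r_n}$, and each of $B_{r_1,\dots,r_n}f$ and $DT_0^{n-1}f$ is a combination of $f$ and its derivatives (up to the order allowed by the hypotheses) against powers of $x$, with explicit polynomial coefficients, so that all the expectations appearing below are finite under the stated conditions.

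For the product gamma identity (\ref{zxcracker11}), write $Y_n=Y_{n-1}X_n$ with $Y_{n-1}\sim\mathrm{PG}(r_1,\dots,r_{n-1},1)$ independent of $X_n\sim\mathrm{Gamma}(r_n,1)$. The base case $n=1$ is $\mathbb{E}[Xf'(X)+rf(X)]=\mathbb{E}[Xf(X)]$, obtained by integrating $\int_0^\infty xf'(x)x^{r-1}\mathrm{e}^{-x}\,\mathrm{d}x/\Gamma(r)$ by parts, the boundary terms vanishing under the stated integrability. For the inductive step set $\phi:=B_{r_1,\dots,r_{n-1}}f$, condition on $Y_{n-1}$, and apply the base-case identity to $X_n$ with test function $h(x):=\phi(Y_{n-1}x)$; scaling gives $T_{r_n}h(x)=(T_{r_n}\phi)(Y_{n-1}x)=(B_{r_1,\dots,r_n}f)(Y_{n-1}x)$, so taking expectations over $Y_{n-1}$ yields $\mathbb{E}[B_{r_1,\dots,r_n}f(Y_n)]=\mathbb{E}[X_n\,B_{r_1,\dots,r_{n-1}}f(Y_n)]$. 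Now condition on $X_n$ instead and apply the inductive hypothesis to $Y_{n-1}$ with test function $g(y):=f(X_ny)$, noting $B_{r_1,\dots,r_{n-1}}g(y)=(B_{r_1,\dots,r_{n-1}}f)(X_ny)$, to conclude $\mathbb{E}[X_n\,B_{r_1,\dots,r_{n-1}}f(Y_n)]=\mathbb{E}[X_nY_{n-1}f(Y_n)]=\mathbb{E}[Y_nf(Y_n)]$, which is (\ref{zxcracker11}).

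The product normal identity (\ref{cracker}) runs on the same template, with $Z_n=N_nZ_{n-1}$, $N_n\sim N(0,1)$ independent of $Z_{n-1}\sim\mathrm{PN}(n-1,1)$, and the classical Stein lemma $\mathbb{E}[f'(N)]=\mathbb{E}[Nf(N)]$ as base case (I take $\sigma^2=1$, the general case being analogous). The extra wrinkle is that $D$ does \emph{not} intertwine with dilations: $D[f_c](x)=c\,f'(cx)$. Hence applying the Stein lemma to $N_n$ in $\mathbb{E}[(T_0^{n-1}f)'(N_nZ_{n-1})]=\mathbb{E}[DT_0^{n-1}f(Z_n)]$ produces a transient factor $Z_{n-1}^{-1}$, which one rewrites using $Z_{n-1}^{-1}N_n=Z_n^{-1}N_n^2$ and the elementary identity $x^{-1}T_0^{k}f(x)=DT_0^{k-1}f(x)$ to reach $\mathbb{E}[DT_0^{n-1}f(Z_n)]=\mathbb{E}[N_n^2\,DT_0^{n-2}f(Z_n)]$. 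Conditioning on $N_n$, the scaling property of $T_0$ turns $DT_0^{n-2}f(N_nZ_{n-1})$ into $N_n^{-1}DT_0^{n-2}g(Z_{n-1})$ with $g(y):=f(N_ny)$, the surviving $N_n^2$ absorbs the $N_n^{-1}$, and the inductive hypothesis for $Z_{n-1}$ gives $\mathbb{E}[N_n^2\,DT_0^{n-2}f(Z_n)]=\mathbb{E}[N_nZ_{n-1}f(Z_n)]=\mathbb{E}[Z_nf(Z_n)]$, which is (\ref{cracker}). (A less probabilistic alternative, in both cases, is to verify from the Mellin-transform recursion for the moments of $Y$, resp.\ $Z$, that the density (\ref{MeijerGamma}), resp.\ (\ref{MeijerN}), solves the formal adjoint differential equation of $\mathcal{A}_Y$, resp.\ $\mathcal{A}_Z$, and then integrate by parts.)

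I expect the only real obstacle to be the integrability bookkeeping: one has to check that the hypotheses on $f$ are inherited by the auxiliary test functions $h$ and $g$ at each stage of the induction --- observe that $B_{r_1,\dots,r_{n-1}}f$ and $T_0^{n-1}f$ involve $f$ together with its derivatives up to order $n-1$ against powers of $x$, so differentiating them brings in $f^{(n)}$ --- and that consequently every boundary term in the successive integrations by parts vanishes and every appeal to Fubini and to the base-case identities is legitimate. The transient negative powers $Z_{n-1}^{-1}$ and $N_n^{-1}$ in the product normal argument are cancelled before any expectation is taken, so they present no genuine difficulty beyond demanding some care.
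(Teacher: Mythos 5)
There is nothing in the paper to compare you against here: the paper does not prove this proposition but imports it, citing \cite{gaunt pn} (Proposition 2.3) for the product normal case and \cite{gaunt ngb} (Proposition 2.3) for the product gamma case. So your proposal has to stand as a self-contained proof, and its skeleton is sound and is essentially the natural route: peel off one factor by conditioning, use the univariate $\mathrm{Gamma}(r,1)$ and $N(0,1)$ Stein identities as base cases, and exploit that $T_r$ intertwines with dilations and that the $T_{r_j}$ commute. Your key identities all check out: $T_r[f(c\,\cdot)](x)=(T_rf)(cx)$, $T_{r_n}B_{r_1,\ldots,r_{n-1}}=B_{r_1,\ldots,r_n}$, $x^{-1}T_0^{k}f(x)=DT_0^{k-1}f(x)$ for $x\neq 0$, and the $N_n^2Z_n^{-1}$ device correctly absorbs the factor created because $D$ does not commute with dilations (legitimate since $Z_{n-1}\neq 0$ and $N_n\neq 0$ almost surely).

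The one genuine gap is precisely the point you wave at in your last paragraph: the hypotheses on $f$ are \emph{not} simply inherited by every quantity your chain passes through. They are inherited in the second half of each inductive step (e.g.\ $y^kg^{(k)}(y)$ evaluated at $Y_{n-1}$ equals $Y_n^kf^{(k)}(Y_n)$, so the conditional integrability needed for the inductive hypothesis holds a.s.), but the intermediate quantities $\mathbb{E}[X_n\,B_{r_1,\ldots,r_{n-1}}f(Y_n)]$ in the gamma case and $\mathbb{E}[N_n^2\,DT_0^{n-2}f(Z_n)]$ in the normal case are linear combinations of mixed moments of the form $\mathbb{E}[X_n\,Y_n^{k}f^{(k)}(Y_n)]$ and $\mathbb{E}[N_n^2\,Z_n^{k-1}f^{(k)}(Z_n)]$, carrying an extra factor of one coordinate that is not a function of the product alone. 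Their absolute convergence does not follow from $\mathbb{E}|Y^{k}f^{(k)}(Y)|<\infty$, resp.\ $\mathbb{E}|Z^{k-1}f^{(k)}(Z)|<\infty$ (conditionally on the product being large, the individual factor is typically large too, so $\mathbb{E}[X_n\mid Y_n]$ is unbounded), and without it the tower-property/Fubini steps turning the a.s.\ conditional identities into unconditional ones, and even the conditional applications of the base-case identities, are not justified. To close this you need either to establish these mixed moments are finite under additional argument, or to run the proof first for a nice dense class of $f$ (say compactly supported, or via the explicit Meijer $G$ densities (\ref{MeijerN}), (\ref{MeijerGamma}) and iterated integration by parts with a careful treatment of boundary terms) and then extend by truncation/limiting arguments to the stated hypotheses. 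This is fixable bookkeeping of the kind the cited papers devote their hypotheses to, but as written it is a real gap, not mere care.
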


Similar characterisations have been obtained for many standard probability distributions (see \cite{ley} for an overview of the current literature), and lie at the heart of Stein's method, by characterising distributions in a convenient manner for the purpose of deriving approximation theorems in probability theory.  For a detailed account of Stein's method for normal approximation see \cite{chen}, and for a simple, general introduction see \cite{ross}.  Whilst Stein characterisations are typically used as part of Stein's method, they have utility in other areas, such as obtaining formulas for moments of probability distributions \cite{gaunt vg} and deriving formulas for probability density functions and characteristic functions \cite{gaunt pn, gaunt ngb}.  In this paper, we shall see consider a rather curious application of the product gamma Stein characterisation (\ref{zxcracker11}) to establishing new integral formulas for the Meijer $G$-function. 

\subsection{Distributional transformations}

The characterising equation (\ref{cracker}) motivates a distributional transformation (\cite{gaunt pn}, Definition 1.2) which generalises the zero bias transformation (see \cite{goldstein}).  For $W$ a mean zero random variable with variance 1, the random variable $W^{*(n)}$ is said to have the $W$-\emph{zero biased distribution of order} $n$ if, for all $f\in C^n(\mathbb{R})$ such that the relevant expectations exist,
\begin{equation}\label{islip}\mathbb{E}[Wf(W)]=\mathbb{E}[DT_1^{n-1}f(W^{*(n)})].
\end{equation}
This distributional transformation was introduced in \cite{gaunt pn}, and a number of interesting properties were obtained, which, in conjugation with the characterisation (\ref{cracker}), allows one to prove product normal approximation theorems (see \cite{gaunt pn}, Section 4).  An analogous distributional transformation is motivated by the characterising equation (\ref{zxcracker11}):
\begin{definition}\label{def1}Let $W$ be a non-negative random variable with $0<\mathbb{E}W<\infty$.  We say that $W^{G(n)}$ has the \emph{$W$-gamma biased distribution of order $n$} with shape parameters $r_1,\ldots,r_n>0$ if, for all $f\in C^n(\mathbb{R}^+)$ such that the relevant expectations exist,
\begin{equation}\label{zxgammachar}\mathbb{E}[Wf(W)]=\mathbb{E}[B_{r_1,\ldots,r_n}f(W^{G(n)})],
\end{equation}
where $r_1,\ldots,r_n$ are such that $\prod_{k=1}^nr_k=\mathbb{E}W$.
\end{definition}
In Lemma \ref{zxetna}, we establish that for any such $W$ there exists a unique random variable $W^{G(n)}$ that has the $W$-gamma biased distribution of order $n$.  In the next section, we shall collect some useful properties of this distributional transformation.  These properties may, in future works, prove useful in deriving product gamma approximation theorems, although in this paper we will exploit its properties to prove Theorem \ref{thmmeijerg}.  

\section{Properties of the gamma bias transformation}

In this section, we establish some properties of the gamma bias transformation of order $n$ from which we shall deduce Theorem \ref{thmmeijerg}.  Firstly, we present a lemma, which gives an inverse operator for the iterated operator $B_{r_1,\ldots,r_n}=T_{r_1}\cdots T_{r_n}$.

\begin{lemma}\label{zxinvlem}Let $\hat{U}_1,\ldots,\hat{U}_n$ be independent $\mathrm{Beta}(r_j,1)$ random variables with $r_j>0$, and define $\hat{V}_n=\prod_{j=1}^n\hat{U}_j$.  Define the operator $H_{r_1,\ldots,r_n}$ by $H_{r_1,\ldots,r_n}f(x)=(\prod_{k=1}^nr_k)^{-1}\mathbb{E}f(x\hat{V}_n)$.  Then, for bounded $f:\mathbb{R}^+\rightarrow\mathbb{R}$, we have

(i) $H_{r_1,\ldots,r_n}f(x)=H_{r_1}\cdots H_{r_n}f(x)$.

(ii) $T_r H_sf(x)=f(x)+(r-s)H_sf(x)$.

(iii) $H_{r_1,\ldots,r_n}$ is the right-inverse of the operator $B_{r_1,\ldots,r_n}$ in the sense that
\begin{equation*}B_{r_1,\ldots,r_n}H_{r_1,\ldots,r_n}f(x)=f(x).
\end{equation*}

(iv) Suppose now that $f\in C^n(\mathbb{R}^+)$.  Then, for any $n\geq 1$,
\begin{equation}\label{zxleftinv}H_{r_1,\ldots,r_n}B_{r_1,\ldots,r_n}f(x)=f(x).
\end{equation}
\end{lemma}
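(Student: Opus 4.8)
The plan is to prove the four parts in order, since each one feeds into the next. For part (i), I would observe that $\hat V_n = \prod_{j=1}^n \hat U_j$ is a product of independent factors, so by conditioning (or Fubini) $\mathbb{E} f(x\hat V_n) = \mathbb{E}[\mathbb{E}[f((x\hat U_1\cdots\hat U_{n-1})\hat U_n)\mid \hat U_1,\ldots,\hat U_{n-1}]]$, and the inner expectation is exactly $r_n H_{r_n} f$ evaluated at $x\hat U_1\cdots \hat U_{n-1}$. Peeling off the factors one at a time and dividing by $\prod_{k=1}^n r_k$ gives the factorisation $H_{r_1,\ldots,r_n} = H_{r_1}\cdots H_{r_n}$. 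This is essentially bookkeeping with the independence structure.

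For part (ii), this is the computational heart of the argument. Writing $H_s f(x) = s^{-1}\int_0^1 f(xu)\, s u^{s-1}\,\mathrm{d}u = \int_0^1 f(xu) u^{s-1}\,\mathrm{d}u$ (using the $\mathrm{Beta}(s,1)$ density $s u^{s-1}$), I would differentiate under the integral sign: $x(H_s f)'(x) = \int_0^1 x u f'(xu) u^{s-1}\,\mathrm{d}u = \int_0^1 (xu) f'(xu) u^{s-1}\,\mathrm{d}u$. Then one integrates by parts in $u$, using $\frac{\mathrm{d}}{\mathrm{d}u} f(xu) = x f'(xu)$, so $\int_0^1 u^s \cdot x f'(xu)\,\mathrm{d}u = [u^s f(xu)]_0^1 - s\int_0^1 u^{s-1} f(xu)\,\mathrm{d}u = f(x) - s H_s f(x)$. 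Hence $T_r H_s f(x) = x(H_s f)'(x) + r H_s f(x) = f(x) - s H_s f(x) + r H_s f(x) = f(x) + (r-s) H_s f(x)$, as claimed. The only care needed is the boundary term at $u=0$, which vanishes since $s>0$ and $f$ is bounded; I expect this to be the step requiring the most attention, though it is still routine.

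For part (iii), I would induct on $n$. The base case $n=1$ is exactly part (ii) with $r=s=r_1$, giving $T_{r_1} H_{r_1} f = f$. For the inductive step, use part (i) to write $B_{r_1,\ldots,r_n} H_{r_1,\ldots,r_n} = T_{r_1}(T_{r_2}\cdots T_{r_n} H_{r_2}\cdots H_{r_n}) H_{r_1}$, apply the induction hypothesis to the middle operator (which acts as the identity), reducing to $T_{r_1} H_{r_1} f = f$ by the base case — a small subtlety is that one should arrange the composition so the matched $T$'s and $H$'s with common index cancel, which the factorisation in (i) permits since the $T$'s and $H$'s with distinct indices need not commute but the telescoping still works when carried out innermost-first. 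Finally, for part (iv), I would again induct, but now using part (ii) in its general form: $H_{r_1,\ldots,r_n} B_{r_1,\ldots,r_n} f = H_{r_1}\cdots H_{r_n} T_{r_1}\cdots T_{r_n} f$; one commutes an $H$ past the $T$'s it does not match using the commutation relation implied by (ii) (namely $H_r T_s = T_s H_r + (\text{something})$, or more cleanly rearrange so $H_{r_n}$ meets $T_{r_n}$), then repeatedly apply part (ii) with $r=s$ to collapse each matched pair, using $C^n$ regularity of $f$ to justify all the differentiations. I expect part (iv) to be the most delicate, because the operators $T_{r_j}$ and $H_{r_k}$ do not commute for $j\neq k$, so the cancellation is not immediate; the key observation is that $[T_r, H_s]$ and the relation in (ii) are compatible enough that one can still push matched indices together and telescope. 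Keeping track of the order of composition carefully is the main obstacle.
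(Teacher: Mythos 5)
Your parts (i)--(iii) are essentially the paper's argument (the paper obtains (ii) by substituting $t=xu$, so that $H_sf(x)=x^{-s}\int_0^x t^{s-1}f(t)\,\mathrm{d}t$ and the $x$-derivative comes from the fundamental theorem of calculus; your integration by parts in $u$ yields the same identity, though note it differentiates $f$, while (ii)--(iii) are stated for bounded $f$ and are later applied, in Lemma \ref{zxetna} and Proposition \ref{zxdoublesquareg}, to functions that are merely continuous --- the substitution $t=xu$ avoids this and is the cleaner route). The genuine problem is part (iv). Part (ii) with $r=s$ gives $T_rH_rf=f$, i.e.\ $H_r$ is a \emph{right} inverse of $T_r$. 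But in $H_{r_1,\ldots,r_n}B_{r_1,\ldots,r_n}f=H_{r_1}\cdots H_{r_n}T_{r_1}\cdots T_{r_n}f$ every $H$ stands to the left of every $T$, so however you rearrange (the $H$'s commute among themselves, as do the $T$'s), the innermost matched pair has the form $H_{r_j}T_{r_j}$, and what you need is the \emph{left}-inverse identity $H_rT_rf=f$. That does not follow formally from (ii): $T_r$ has the nontrivial kernel spanned by $x^{-r}$ on $C^1(\mathbb{R}^+)$, so a right inverse is not automatically a left inverse, and the ``commutation relation implied by (ii)'', $H_rT_s=T_sH_r+\cdots$, is exactly what is never established --- (ii) says nothing about compositions in the order $H$ after $T$.

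The missing step is short but it is the substance of (iv): for bounded $f\in C^1(\mathbb{R}^+)$ one computes directly $H_rT_rf(x)=x^{-r}\int_0^x t^{r-1}\big(tf'(t)+rf(t)\big)\,\mathrm{d}t=x^{-r}\int_0^x \big(t^rf(t)\big)'\,\mathrm{d}t=f(x)$, the boundary term at $0$ vanishing because $r>0$ and $f$ is bounded near the origin; this is precisely how the paper proves (iv), and it also delivers the relation $H_sT_rf=f+(r-s)H_sf=T_rH_sf$ that you were hoping to extract from (ii). With $H_rT_r=\mathrm{id}$ in hand, your innermost-first telescoping goes through exactly as in your part (iii). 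So the architecture of your proof is right, but as written the reduction of (iv) to repeated applications of (ii) is invalid, and the one-line integration by parts above is the content that must be added.
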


\begin{proof}(i) We begin by obtaining a useful formula for $H_{r_1,\ldots,r_n}f(x)=(\prod_{k=1}^nr_k)^{-1}\mathbb{E}f(x\hat{V}_n)$.  We have that
\begin{equation*}H_{r_1,\ldots,r_n}f(x)=\int_{(0,1)^n}f(xu_1\cdots u_n)u_1^{r_1-1}\cdots u_n^{r_n-1}\,\mathrm{d}u_1\cdots \mathrm{d}u_n.
\end{equation*}
By a change of variables $u_n=\frac{t_n}{x}$ and $u_j=\frac{t_j}{t_{j+1}}$ for $1\leq j\leq n-1$, this can be written as
\begin{equation}\label{hforz}H_{r_1}f(x)=x^{-r_1}\int_0^xt_1^{r_1-1}f(t_1)\,\mathrm{d}t_1,
\end{equation}
and, for $n \geq 2$,
\begin{equation*}H_{r_1,\ldots,r_n}f(x)=x^{-r_n}\int_0^x\!\int_0^{t_n}\dotsi\int_0^{t_2}f(t_1)t_1^{r_1-1}t_2^{r_2-r_1-1}\cdots t_n^{r_n-r_{n-1}-1}\,\mathrm{d}t_1\mathrm{d}t_2\cdots \mathrm{d}t_n.
\end{equation*}
From these representations of $H_{r_1,\ldots,r_n}f(x)$, it is clear that $H_{r_1,\ldots,r_n}f(x)=H_{r_1}\cdots H_{r_n}f(x)$.

(ii) We now use the integral representation (\ref{hforz}) of $H_sf(x)$ to obtain
\begin{align*}T_rH_sf(x)&=x\frac{\mathrm{d}}{\mathrm{d}x}\bigg(x^{-s}\int_0^xt^{s-1}f(t)\,\mathrm{d}t\bigg)+rx^{-s}\int_0^xt^{s-1}f(t)\,\mathrm{d}t \\
&=-sx^{-s}\int_0^xt^{s-1}f(t)\,\mathrm{d}t+x^{1-s}\cdot x^{s-1}f(x)+rx^{-s}\int_0^xt^{s-1}f(t)\,\mathrm{d}t \\
&=f(x)+(r-s)H_sf(x).
\end{align*} 

(iii) From part (ii), $T_rH_rf(x)=f(x)$.  But since $B_{r_1,\ldots,r_n}f(x)=T_{r_n}\cdots T_{r_1}f(x)$ and $H_{r_1,\ldots,r_n}f(x)=H_{r_1}\cdots H_{r_n}f(x)$, it follows that $B_{r_1,\ldots,r_n}H_{r_1,\ldots,r_n}f(x)=f(x)$.

(iv) We have
\begin{align*}H_rT_rf(x)&=x^{-r}\int_0^xt^{r-1}(tf'(t)+rf(t))\,\mathrm{d}t=x^{-r}\int_0^x(t^{r}f(t))'\,\mathrm{d}t=x^{-r}\Big[t^{r}f(t)\Big]_0^x=f(x),
\end{align*}
and on using a similar argument to part (iii) it follows that $H_{r_1,\ldots,r_n}B_{r_1,\ldots,r_n}f(x)=f(x)$.
\end{proof}

We now make use of Lemma \ref{zxinvlem} to establish the existence and uniqueness of the gamma bias transformation of order $n$.  The proof of the following lemma uses a similar argument to the ones used by \cite{goldstein} and \cite{gaunt pn} to prove the existence and uniqueness of the zero bias transformation and zero bias transformation of order $n$, respectively.

\begin{lemma}\label{zxetna} Let $W$ be a non-negative random variable with $0<\mathbb{E}W<\infty$.  Then there exists a unique random variable $W^{G(n)}$ such that, for all $f\in C^n(\mathbb{R}^+)$ for which the relevant expectations exist,
\begin{equation*}\mathbb{E}Wf(W)=\mathbb{E}B_{r_1,\ldots,r_n}f(W^{G(n)}),
\end{equation*}
where $r_1,\ldots,r_n$ are positive constants such that $\prod_{k=1}^nr_k=\mathbb{E}W$.
\end{lemma}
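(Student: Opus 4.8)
The plan is to exhibit the distribution of $W^{G(n)}$ explicitly by specifying how $\mathbb{E}g(W^{G(n)})$ acts on an arbitrary bounded test function $g$, and then to verify that this candidate satisfies the defining equation \eqref{zxgammachar} and that it is the only distribution that can do so. The natural candidate comes from inverting the operator $B_{r_1,\ldots,r_n}$ using Lemma \ref{zxinvlem}: given bounded $g:\mathbb{R}^+\to\mathbb{R}$, set $f=H_{r_1,\ldots,r_n}g$, so that by part (iii) of Lemma \ref{zxinvlem} we have $B_{r_1,\ldots,r_n}f=g$, and hence the required identity $\mathbb{E}Wf(W)=\mathbb{E}g(W^{G(n)})$ forces us to \emph{define}
\[
\mathbb{E}g(W^{G(n)}) := \mathbb{E}\big[W\,(H_{r_1,\ldots,r_n}g)(W)\big].
\]

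First I would check that the right-hand side above is a well-defined bounded linear functional of $g$ that is nonnegative on nonnegative $g$ and sends $g\equiv 1$ to $1$, so that by the Riesz representation theorem it determines a genuine probability distribution; this is where the normalisation $\prod_{k=1}^n r_k=\mathbb{E}W$ is used, since $H_{r_1,\ldots,r_n}1 = (\prod_k r_k)^{-1}$ by the definition of $H_{r_1,\ldots,r_n}$ via $\mathbb{E}\,1 = 1$, whence $\mathbb{E}[W\,(H_{r_1,\ldots,r_n}1)(W)] = (\prod_k r_k)^{-1}\mathbb{E}W = 1$. Nonnegativity is immediate from the representation $(H_{r_1,\ldots,r_n}g)(x) = (\prod_k r_k)^{-1}\mathbb{E}g(x\hat V_n)$ with $\hat V_n\in(0,1)$, and boundedness of the functional follows from $|(H_{r_1,\ldots,r_n}g)(x)|\le (\prod_k r_k)^{-1}\|g\|_\infty$ together with $\mathbb{E}W<\infty$. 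Next, for existence, I would take an arbitrary $f\in C^n(\mathbb{R}^+)$ for which the relevant expectations exist, put $g=B_{r_1,\ldots,r_n}f$, and use the left-inverse property \eqref{zxleftinv}, namely $H_{r_1,\ldots,r_n}B_{r_1,\ldots,r_n}f=f$, to get
\[
\mathbb{E}B_{r_1,\ldots,r_n}f(W^{G(n)}) = \mathbb{E}g(W^{G(n)}) = \mathbb{E}\big[W\,(H_{r_1,\ldots,r_n}B_{r_1,\ldots,r_n}f)(W)\big] = \mathbb{E}[Wf(W)],
\]
which is exactly \eqref{zxgammachar}. For uniqueness, suppose $X$ is any random variable satisfying $\mathbb{E}Wf(W)=\mathbb{E}B_{r_1,\ldots,r_n}f(X)$ for all admissible $f$; given a bounded $g$, apply this with $f=H_{r_1,\ldots,r_n}g$ (which lies in $C^n(\mathbb{R}^+)$ since it is given by an integral of $g$ as in the representations in the proof of Lemma \ref{zxinvlem}, and the relevant expectations exist because $f$ is bounded) and use $B_{r_1,\ldots,r_n}H_{r_1,\ldots,r_n}g=g$ to conclude $\mathbb{E}g(X)=\mathbb{E}[W\,(H_{r_1,\ldots,r_n}g)(W)]=\mathbb{E}g(W^{G(n)})$ for all bounded $g$, so $X\stackrel{d}{=}W^{G(n)}$.

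The main obstacle I anticipate is not any single deep step but rather the bookkeeping around the function classes: I must make sure that $f=H_{r_1,\ldots,r_n}g$ is genuinely admissible as a test function in the hypothesised identity for $X$ (smoothness is clear from the iterated-integral representation of $H$, and the moment conditions of the form $\mathbb{E}|X^k f^{(k)}(X)|<\infty$ need a short argument using boundedness of $g$ and the explicit form of the derivatives of $H_{r_1,\ldots,r_n}g$), and conversely that when proving existence the test function $g=B_{r_1,\ldots,r_n}f$ is bounded or at least integrable against the candidate law. A clean way to sidestep delicate integrability issues is to first establish existence and uniqueness on the class of bounded $C^n$ functions with bounded derivatives — which suffices to pin down the distribution — and then extend \eqref{zxgammachar} to all admissible $f$ by a standard approximation argument, exactly as in \cite{goldstein} and \cite{gaunt pn}.
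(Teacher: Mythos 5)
Your proposal is correct and follows essentially the same route as the paper: define the candidate law through the positive linear functional $g\mapsto\mathbb{E}[W\,H_{r_1,\ldots,r_n}g(W)]$, invoke the Riesz representation theorem (with $\prod_k r_k=\mathbb{E}W$ giving total mass $1$), and verify the defining identity via the inverse-operator properties of Lemma \ref{zxinvlem}. The only difference is that you spell out the uniqueness step (using $B_{r_1,\ldots,r_n}H_{r_1,\ldots,r_n}g=g$ on a determining class of test functions), which the paper leaves largely implicit; this is a welcome but not substantively different elaboration.
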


\begin{proof}We define a linear functional $Q$ by
\[Qf=\mathbb{E}WH_{r_1,\ldots,r_n}f(W),\]
where $H_{r_1,\ldots,r_n}$ is defined as in Lemma \ref{zxinvlem}.  As $\mathbb{E}W<\infty$, it follows that $Qf$ exists for all continuous $f$ with compact support.  To see that $Q$ is positive, take $f\geq 0$.  Then $H_{r_1,\ldots,r_n}f(x)\geq0$.  Hence $\mathbb{E}WH_{r_1,\ldots,r_n}f(W)\geq 0$, and $Q$ is positive.  By the Riesz representation theorem (see, for example, \cite{folland}) we have $Qf=\int f\,\mathrm{d}\nu$, for some unique Radon measure $\nu$, which is a probability measure as $Q1=\mathbb{E}WH_{r_1,\ldots,r_n}1=\big(\prod_{k=1}^nr_k\big)^{-1}\mathbb{E}W=1$.  

We now take $f(x)=B_{r_1,\ldots,r_n}g(x),$ where $g\in C^n(\mathbb{R}^+)$, with derivatives up to $n$-th order being continuous with compact support.  Then, from (\ref{zxleftinv}), 
\[\mathbb{E}WH_{r_1,\ldots,r_n}B_{r_1,\ldots,r_n}g(W)=\mathbb{E}Wg(W),\]
which completes the proof.
\end{proof}

Combining Lemma \ref{zxetna} with the characterising equation (\ref{zxcracker11}) for the $\mathrm{PG}(r_1,\ldots,r_n,1)$ distribution immediately gives the following lemma.
\begin{lemma}\label{lemma1}Let $W$ be a non-negative random variable with $0<\mathbb{E}W<\infty$, and let $W^{G(n)}$ have the $W$-gamma biased distribution of order $n$ with shape parameters $r_1,\ldots,r_n$, in accordance with Definition \ref{def1}.  Then the $\mathrm{PG}(r_1,\ldots,r_n,1)$ distribution is the unique fixed point of the $W$-gamma biased distribution of order $n$ with shape parameters $r_1,\ldots,r_n$.
\end{lemma}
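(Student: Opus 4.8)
The plan is to unwind the definition of ``fixed point'' and match it against the product gamma characterising equation (\ref{zxcracker11}). By Definition \ref{def1}, a non-negative random variable $W$ with $0<\mathbb{E}W<\infty$ and $\prod_{k=1}^n r_k=\mathbb{E}W$ is a fixed point of the $W$-gamma biased distribution of order $n$ precisely when $W^{G(n)}\stackrel{d}{=}W$; substituting $W$ for $W^{G(n)}$ in (\ref{zxgammachar}), this says that $\mathbb{E}[Wf(W)]=\mathbb{E}[B_{r_1,\ldots,r_n}f(W)]$ for all $f\in C^n(\mathbb{R}^+)$ for which the relevant expectations exist. Conversely, if $W$ satisfies this relation then $W$ itself meets the defining requirement of $W^{G(n)}$, and the uniqueness half of Lemma \ref{zxetna} forces $W^{G(n)}\stackrel{d}{=}W$. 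So the first step is simply to record that ``$W$ is a fixed point'' is equivalent to ``$\mathbb{E}[\mathcal{A}_Wf(W)]=0$ for all admissible $f$'', with $\mathcal{A}_Wf(x)=B_{r_1,\ldots,r_n}f(x)-xf(x)$ the operator of (\ref{zxcracker11}).

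Given this, existence of the claimed fixed point is immediate: for $Y\sim\mathrm{PG}(r_1,\ldots,r_n,1)$ the characterising equation (\ref{zxcracker11}) gives $\mathbb{E}[\mathcal{A}_Yf(Y)]=0$, and since $\mathbb{E}Y=\prod_{k=1}^n r_k$ (the mean of a product of independent $\mathrm{Gamma}(r_k,1)$ variables) Definition \ref{def1} applies with shape parameters $r_1,\ldots,r_n$, so $Y^{G(n)}\stackrel{d}{=}Y$. Uniqueness then follows by invoking that (\ref{zxcracker11}) is a genuine Stein \emph{characterisation} of $\mathrm{PG}(r_1,\ldots,r_n,1)$, i.e.\ that \emph{any} non-negative $W$ with $\mathbb{E}[\mathcal{A}_Wf(W)]=0$ for all admissible $f$ must have the $\mathrm{PG}(r_1,\ldots,r_n,1)$ law; this is exactly the characterisation established in \cite{gaunt ngb}, and combined with the equivalence of the first paragraph it yields the lemma.

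The step I expect to be the main obstacle is this ``only if'' direction (if one insists on a self-contained argument rather than quoting \cite{gaunt ngb}). The natural route is the standard Stein-method argument: for a bounded test function $h$, solve the Stein equation $\mathcal{A}_Yf=h-\mathbb{E}h(Y)$ for a solution $f$ lying in the admissible class — this is where the real work sits, and where one would combine the operator $H_{r_1,\ldots,r_n}$ and identities (ii)--(iv) of Lemma \ref{zxinvlem} with regularity estimates — and then for any fixed point $W$ conclude $\mathbb{E}h(W)=\mathbb{E}h(Y)$ for all such $h$, hence $W\stackrel{d}{=}Y$. An alternative worth noting is to apply the fixed-point relation with $f=H_{r_1,\ldots,r_n}g$ and Lemma \ref{zxinvlem}(iii), which reduces it (after using the representation $H_{r_1,\ldots,r_n}g(x)=(\prod_k r_k)^{-1}\mathbb{E}g(x\hat{V}_n)$ and $\mathbb{E}W=\prod_k r_k$) to the distributional identity $W\stackrel{d}{=}\widetilde{W}\,\hat{V}_n$, with $\widetilde{W}$ the size-biased version of $W$; taking Mellin transforms gives the recursion $\mathbb{E}[W^{s+1}]=\mathbb{E}[W^s]\prod_{j=1}^n(r_j+s)$, which determines every integer moment $\mathbb{E}[W^k]=\prod_{j=1}^n\Gamma(r_j+k)/\Gamma(r_j)$ to be that of $\mathrm{PG}(r_1,\ldots,r_n,1)$. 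One then has to confront moment-determinacy of the product gamma law, which is delicate for $n\ge 3$, so the Stein-equation route is the cleaner one.
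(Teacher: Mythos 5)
Your proposal is correct and is essentially the paper's own argument: the paper proves the lemma in one line by combining the existence--uniqueness result of Lemma \ref{zxetna} with the characterising equation (\ref{zxcracker11}), which is exactly your first two paragraphs. You also rightly observe that the uniqueness half rests on the converse (``only if'') direction of the characterisation, which the paper leaves implicit by citing \cite{gaunt ngb} rather than proving it; your extra sketches of a self-contained argument go beyond what the paper does but are not needed for its proof.
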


With the aid of Lemma \ref{zxinvlem}, we can obtain a useful relationship between the gamma bias distribution of order $n$ in terms of the size bias distribution, which is analogous to the relationship (see \cite{gaunt pn}) between the zero bias distribution of order $n$ in terms of the square bias distribution (defined in \cite{chen}).   If $W\geq0$ has mean $\mu>0$, we say $W^s$ has the $W$-size biased distribution if, for all $f$ such that $\mathbb{E}Wf(W)$ exists,
\[\mathbb{E}Wf(W)=\mu\mathbb{E}f(W^s).\] 
The size bias coupling is commonly used in Stein's method; for an application of this coupling to normal approximation see \cite{baldi}.

\begin{proposition}\label{zxdoublesquareg}Let $W$ be a non-negative random variable with $0<\mathbb{E}W<\infty$, and let $W^s$ have the $W$-size bias distribution.  Let $W^s$ and $\{U_k\}_{1\leq k\leq n}$ be mutually independent, with $U_k\sim \mathrm{Beta}(r_k,1)$, where $r_1,\ldots, r_n>0$ are such that $\prod_{k=1}^nr_k=\mathbb{E}W$.   Define $V_n=\prod_{k=1}^nU_k$.  Then, the random variable
\begin{equation}\label{zxsquareunif}W^{G(n)}\stackrel{\mathcal{D}}{=}V_nW^s
\end{equation}
has the $W$-gamma bias distribution of order $n$ with shape parameters $r_1,\ldots,r_n$.
\end{proposition}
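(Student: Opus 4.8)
The plan is to verify directly that the random variable $V_n W^s$ satisfies the defining relation (\ref{zxgammachar}) of the gamma bias transformation of order $n$; since Lemma \ref{zxetna} guarantees uniqueness, this will suffice. First I would take an arbitrary $f \in C^n(\mathbb{R}^+)$ for which the relevant expectations exist and unwind the right-hand side $\mathbb{E}[B_{r_1,\ldots,r_n} f(V_n W^s)]$. The key observation is that applying the operator $B_{r_1,\ldots,r_n}$ and then taking expectation over $V_n = \prod_{k=1}^n U_k$ (with the $U_k \sim \mathrm{Beta}(r_k,1)$ independent of each other and of $W^s$) should collapse via Lemma \ref{zxinvlem}(iv): conditionally on $W^s = w$, the map $g \mapsto \mathbb{E}_{V_n} g(V_n w)$ is, up to the normalising constant $\prod_k r_k$, exactly the operator $H_{r_1,\ldots,r_n}$ evaluated at $w$. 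Concretely, writing $\phi(w) = B_{r_1,\ldots,r_n} f(w)$, Lemma \ref{zxinvlem}(iv) gives $H_{r_1,\ldots,r_n}\phi(w) = H_{r_1,\ldots,r_n}B_{r_1,\ldots,r_n}f(w) = f(w)$, so that $\mathbb{E}[\phi(V_n w)] = (\prod_{k=1}^n r_k)\, H_{r_1,\ldots,r_n}\phi(w) = (\prod_{k=1}^n r_k)\, f(w)$.

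Second, I would assemble the two layers of expectation. Using independence of $V_n$ and $W^s$ and then the size bias identity $\mathbb{E}[W^s\text{-integrand}]$ form, we get
\begin{align*}
\mathbb{E}[B_{r_1,\ldots,r_n} f(V_n W^s)] &= \mathbb{E}_{W^s}\Big[\mathbb{E}_{V_n}\big[\phi(V_n W^s)\,\big|\,W^s\big]\Big] \\
&= \mathbb{E}_{W^s}\bigg[\Big(\prod_{k=1}^n r_k\Big) f(W^s)\bigg] = \Big(\prod_{k=1}^n r_k\Big)\mathbb{E}[f(W^s)].
\end{align*}
Since $\prod_{k=1}^n r_k = \mathbb{E}W = \mu$ by hypothesis, the right-hand side equals $\mu\,\mathbb{E}[f(W^s)]$, which by the definition of the $W$-size biased distribution is precisely $\mathbb{E}[Wf(W)]$. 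Thus $V_n W^s$ satisfies (\ref{zxgammachar}), and by the uniqueness part of Lemma \ref{zxetna} it must coincide in distribution with $W^{G(n)}$.

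Third, a small amount of care is needed on two technical fronts, which I expect to be the only real obstacles. One is the interchange of the operator $B_{r_1,\ldots,r_n}$ (which involves up to $n$ derivatives and multiplications by $x$) with the expectation over $V_n$; here I would note that for fixed $w$ the function $u \mapsto f(uw)$ and its derivatives are controlled on the compact set of interest, and that Lemma \ref{zxinvlem}(iv) is an identity of operators valid pointwise, so the manipulation $\mathbb{E}_{V_n}[B_{r_1,\ldots,r_n}f(V_n w)]$ can be read as $B$ applied in the $w$-variable to $\mathbb{E}_{V_n}f(V_n w) = (\prod_k r_k) H_{r_1,\ldots,r_n} f(\cdot)(w)$ followed by the identity $B H = \mathrm{id}$ from Lemma \ref{zxinvlem}(iii) --- equivalently one may use part (iv) after recognising the $V_n$-average as $H_{r_1,\ldots,r_n}$ acting on $B_{r_1,\ldots,r_n}f$. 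The second is a routine justification, via Fubini/Tonelli, that the relevant expectations are finite so that the order of integration may be swapped; this follows from $0 < \mathbb{E}W < \infty$ together with the assumed integrability conditions on $f$, exactly as in the finiteness arguments in the proof of Lemma \ref{zxetna}. Once these are in place, the identification is immediate.
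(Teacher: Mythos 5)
Your argument is correct, but it runs in the opposite direction to the paper's. The paper starts from $W^{G(n)}$ (whose existence is supplied by Lemma \ref{zxetna}), takes $f\in C_c$, writes $f=B_{r_1,\ldots,r_n}H_{r_1,\ldots,r_n}f$ via the \emph{right}-inverse identity of Lemma \ref{zxinvlem}(iii), applies the defining relation (\ref{zxgammachar}) with test function $H_{r_1,\ldots,r_n}f$, and then uses the $V_n$-average representation of $H$ together with the size-bias identity to conclude $\mathbb{E}f(W^{G(n)})=\mathbb{E}f(V_nW^s)$ for all $f\in C_c$, hence equality in distribution. You instead verify the defining relation (\ref{zxgammachar}) directly for the candidate $V_nW^s$: conditioning on $W^s$, you recognise the $V_n$-average of $B_{r_1,\ldots,r_n}f$ as $(\prod_k r_k)H_{r_1,\ldots,r_n}B_{r_1,\ldots,r_n}f$ and collapse it with the \emph{left}-inverse identity of Lemma \ref{zxinvlem}(iv), then finish with the size-bias identity and invoke uniqueness from Lemma \ref{zxetna} to get the display (\ref{zxsquareunif}). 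Both routes use the same three ingredients (the inverse operator $H$, its probabilistic representation, and size biasing); what the paper's direction buys is that it only ever needs bounded, compactly supported test functions, so Lemma \ref{zxinvlem} is applied well within its stated hypotheses, whereas your direction must apply part (iv) to every admissible $f\in C^n(\mathbb{R}^+)$ in Definition \ref{def1}, and part (iv) is proved under a boundedness/integrability-near-zero assumption (one needs $t^{r}f(t)\to 0$ as $t\to 0^+$), so strictly you are asserting the defining identity on a slightly smaller class than the definition demands --- a minor gap, comparable to the paper's own level of rigour, and easily patched by either restricting to bounded $f$ or arguing as the paper does. Your approach has the small advantage of not needing the existence half of Lemma \ref{zxetna}, only its uniqueness; also note that in your main computation no interchange of $B$ with $\mathbb{E}_{V_n}$ is actually required (the operator acts on $f$ before the average is taken), so the first technical point you raise dissolves.
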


\begin{proof}Let $f\in C_c$, the set of continuous functions on $\mathbb{R}^+$ with compact support.  Recall from Lemma \ref{zxinvlem} that $B_{r_1,\ldots,r_n}H_{r_1,\ldots,r_n}g(x)=g(x)$ for any $g$.  Thus,
\begin{align*}\mathbb{E}f(W^{G(n)})&=\mathbb{E}B_{r_1,\ldots,r_n}H_{r_1,\ldots,r_n}f(W^{G(n)})=\mathbb{E}WH_{r_1,\ldots,r_n}f(W)\\
&=\prod_{k=1}^n(1/r_k)\mathbb{E}Wf(V_nW)=\prod_{k=1}^n(1/r_k)\mathbb{E}W\mathbb{E}f(V_nW^s)=\mathbb{E}f(V_nW^s).
\end{align*}
Since the expectation of $f(W^{G(n)})$ and $f(V_nW^s)$ are equal for all $f\in C_c$, the random variables $W^{G(n)}$ and $V_nW^s$ must be equal in distribution.
\end{proof}

We now note some formulas for the probability density function of the product of $n$ independent beta random variables, that we will use in the proof of Proposition \ref{zxjohn hates}.

\begin{lemma}\label{zxurklem}Let $\{U_k\}_{1\leq k\leq n}$ be mutually independent $\mathrm{Beta}(r_k,1)$ random variables with $r_1,\ldots,r_n>0$.  Then, the density function of $V_n=\prod_{k=1}^nU_k$ is given by  
\begin{equation}\label{pdfprodbetass}p_{V_n}(x)=\bigg(\prod_{i=1}^nr_i\bigg)G_{n,n}^{n,0}\bigg( x\; \bigg| \; \begin{matrix} r_1,\ldots, r_n \\
r_1-1,\ldots,r_n-1 \end{matrix} \bigg), \quad 0<x<1.
\end{equation}
When the $r_k$ are distinct the density of $V_n$ can be written as  
\begin{equation}\label{zxpdfurk}p_{V_n}(x)=\bigg(\prod_{i=1}^nr_i\bigg)\sum_{k=1}^n\frac{x^{r_k-1}}{\prod_{j\not= k}^n(r_j-r_k)}, \quad 0<x<1,
\end{equation}
and the distribution function of $V_n$ is given by
\begin{equation}\label{zxcdfurk}F_{V_n}(x)=\sum_{k=1}^n\bigg(\prod_{j\not=k}^n\frac{r_j}{r_j-r_k}\bigg)x^{r_k}, \quad 0<x<1.
\end{equation}
\end{lemma}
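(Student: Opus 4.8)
The plan is to build everything from the density of a single $\mathrm{Beta}(r_k,1)$ variable, namely $p_{U_k}(x)=r_kx^{r_k-1}$ on $(0,1)$, which is the one-factor instance of (\ref{zxMeijerBC}) with $a_1=r_k$, $b_1=1$ (so that $\Gamma(a_1+b_1)/\Gamma(a_1)=\Gamma(r_k+1)/\Gamma(r_k)=r_k$). For the Meijer $G$-function form (\ref{pdfprodbetass}) I would simply specialise the stated product-of-betas density (\ref{zxMeijerBC}) to $b_1=\cdots=b_n=1$: each prefactor $\Gamma(a_i+b_i)/\Gamma(a_i)$ equals $r_i$, each upper parameter $a_i+b_i-1$ becomes $r_i$ and each lower parameter $a_i-1$ becomes $r_i-1$, which is exactly (\ref{pdfprodbetass}). (One could instead verify this directly from the Mellin--Barnes definition of the $G$-function: the Mellin transform of $G_{n,n}^{n,0}(x\,|\,r_1,\dots,r_n;r_1-1,\dots,r_n-1)$ is $\prod_{j=1}^n\Gamma(s+r_j-1)/\Gamma(s+r_j)=\prod_{j=1}^n(s+r_j-1)^{-1}$, which matched against the Mellin transform of $V_n$ computed below again gives (\ref{pdfprodbetass}).)

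For the explicit density (\ref{zxpdfurk}) in the case of distinct $r_k$, I would pass to Mellin transforms. By independence,
\[\mathbb{E}\big[V_n^{s-1}\big]=\prod_{k=1}^n\mathbb{E}\big[U_k^{s-1}\big]=\prod_{k=1}^n\frac{r_k}{s+r_k-1},\]
valid for $\mathrm{Re}(s)>1-\min_k r_k$. Since the $r_k$ are distinct, the poles $s=1-r_k$ are simple, and the partial-fraction expansion reads
\[\prod_{k=1}^n\frac{r_k}{s+r_k-1}=\bigg(\prod_{i=1}^nr_i\bigg)\sum_{k=1}^n\frac{1}{\prod_{j\neq k}(r_j-r_k)}\cdot\frac{1}{s+r_k-1},\]
the coefficient of $(s+r_k-1)^{-1}$ being the residue $\prod_{j\neq k}(r_j-r_k)^{-1}$. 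Because $x\mapsto x^{r_k-1}$ on $(0,1)$ has Mellin transform $(s+r_k-1)^{-1}$ on the same half-plane, linearity together with uniqueness of the Mellin transform forces
\[p_{V_n}(x)=\bigg(\prod_{i=1}^nr_i\bigg)\sum_{k=1}^n\frac{x^{r_k-1}}{\prod_{j\neq k}(r_j-r_k)},\qquad 0<x<1,\]
which is (\ref{zxpdfurk}). (A transform-free alternative is induction on $n$ using the multiplicative-convolution identity $p_{V_n}(y)=\int_y^1p_{V_{n-1}}(y/u)\,r_nu^{r_n-2}\,\mathrm{d}u$, but recombining the resulting terms needs a standard partial-fraction identity, so the Mellin route seems cleaner.)

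Finally, (\ref{zxcdfurk}) follows by integrating (\ref{zxpdfurk}) term by term: for $0<x<1$,
\[F_{V_n}(x)=\int_0^xp_{V_n}(t)\,\mathrm{d}t=\bigg(\prod_{i=1}^nr_i\bigg)\sum_{k=1}^n\frac{1}{\prod_{j\neq k}(r_j-r_k)}\cdot\frac{x^{r_k}}{r_k}=\sum_{k=1}^n\bigg(\prod_{j\neq k}\frac{r_j}{r_j-r_k}\bigg)x^{r_k},\]
using $(\prod_{i=1}^nr_i)/r_k=\prod_{j\neq k}r_j$. The only step that needs genuine care is the appeal to uniqueness of the Mellin transform (equivalently, keeping track of the strip of convergence); everything else is elementary bookkeeping, so I do not anticipate a real obstacle.
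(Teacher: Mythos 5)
Your proposal is correct, and for the first and third formulas it coincides with the paper's argument: (\ref{pdfprodbetass}) is read off from (\ref{zxMeijerBC}) by setting $b_i=1$, and (\ref{zxcdfurk}) is obtained by termwise integration of (\ref{zxpdfurk}). Where you genuinely diverge is the proof of (\ref{zxpdfurk}): the paper proceeds by induction on $n$, computing the density of $U_{n+1}V_n$ as a multiplicative convolution and recombining the resulting terms into the partial-fraction form, whereas you compute the Mellin transform $\mathbb{E}[V_n^{s-1}]=\prod_k r_k/(s+r_k-1)$, expand it in partial fractions at the simple poles $s=1-r_k$, and invert term by term, invoking uniqueness of the Mellin transform on a common half-plane. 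Your route is cleaner and more conceptual -- it makes the partial-fraction structure of the answer transparent, avoids the bookkeeping of the inductive recombination, and sits naturally beside the Mellin--Barnes definition of the $G$-function (your parenthetical check of (\ref{pdfprodbetass}) via $\prod_j\Gamma(s+r_j-1)/\Gamma(s+r_j)$ is exactly the right sanity check) -- but it does import transform-theoretic input: you need $V_n$ to possess a density (immediate from (\ref{zxMeijerBC}) or from $U_1$ being absolutely continuous) and you need injectivity of the Mellin transform for $L^1$ functions on a vertical line inside the strip, which you correctly flag as the one point requiring care. The paper's induction buys complete elementarity and self-containedness at the cost of a slightly messier computation; both are valid, and your version generalises more readily (e.g.\ it indicates how repeated $r_k$ produce the logarithmic terms appearing later in (\ref{zxzxcv}) via higher-order poles).
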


\begin{proof}Formula (\ref{pdfprodbetass}) follows immediately from (\ref{zxMeijerBC}).  We prove that formula (\ref{zxpdfurk}) holds by induction on $n$.  The result holds for $n=1$, so suppose that for some $n\geq 1$,
\begin{equation*}p_{V_n}(v)=\bigg(\prod_{i=1}^nr_i\bigg)\sum_{k=1}^n\frac{v^{r_k-1}}{\prod_{j\not= k}^n(r_j-r_k)}, \quad 0<v<1.
\end{equation*}
By the inductive hypothesis, the joint density of $V_n$ and an independent $\mathrm{Beta}(r_{n+1},1)$ random variable $U_{n+1}$ is given by
\begin{equation*}p_{U_{n+1},V_n}(u,v)=\bigg(\prod_{i=1}^{n+1}r_i\bigg)u^{r_{n+1}-1}\sum_{k=1}^n\frac{v^{r_k-1}}{\prod_{j\not= k}^n(r_j-r_k)}, \quad 0<u,v<1.
\end{equation*}
Making the change of variables $X=U_{n+1}V_n$, we have
\begin{equation*}p_{X,V_n}(x,v)=\bigg(\prod_{i=1}^{n+1}r_i\bigg)x^{r_{n+1}-1}\sum_{k=1}^n\frac{v^{r_k-r_{n+1}-1}}{\prod_{j\not= k}^n(r_j-r_k)}, \quad 0<x<v<1,
\end{equation*}
and the marginal distribution of $X$ is given by
\begin{align*}p_X(x)&=\bigg(\prod_{i=1}^{n+1}r_i\bigg)x^{r_{n+1}-1}\sum_{k=1}^n\int_x^1\frac{v^{r_k-r_{n+1}-1}}{\prod_{j\not= k}^n(r_j-r_k)}\,\mathrm{d}v \\
&=\bigg(\prod_{i=1}^{n+1}r_i\bigg)\sum_{k=1}^n\bigg(\frac{x^{r_k-1}}{\prod_{j\not= k}^{n+1}(r_j-r_k)}-\frac{x^{r_{n+1}-1}}{\prod_{j\not=k}^{n+1}(r_j-r_k)}\bigg) \\
&=\bigg(\prod_{i=1}^{n+1}r_i\bigg)\bigg[\sum_{k=1}^n\frac{x^{r_k-1}}{\prod_{j\not= k}^{n+1}(r_j-r_k)}+\frac{x^{r_{n+1}-1}}{\prod_{j=1}^{n}(r_j-r_{n+1})}\bigg]\\
&=\bigg(\prod_{i=1}^{n+1}r_i\bigg)\sum_{k=1}^{n+1}\frac{x^{r_k-1}}{\prod_{j\not= k}^{n+1}(r_j-r_k)},
\end{align*}
which completes the inductive proof.  Formula (\ref{zxcdfurk}) for the distribution function of $V_n$ now follows immediately on integrating the formula for the density function of $V_n$ over the interval $(0,x)$.
\end{proof}

We are now in a position to prove the main result of this section: a formula for the distribution function and the density of the gamma bias transformation of order $n$.  The formula simplifies for certain values of the shape parameters $r_1,\ldots,r_n$.  

\begin{proposition}\label{zxjohn hates} Let $W$ be a random variable with $\mathbb{E}W=\prod_{k=1}^nr_k$, and let $W^{G(n)}$ have the $W$-gamma biased distribution of order $n$ with shape parameters $r_1,\ldots,r_n>0$.

(i) Let $V_n$ be distributed as the product of the mutually independent random variables $U_k\sim \mathrm{Beta}(r_k,1)$, $k=1,\ldots,n$.  Then, the distribution function of $W^{G(n)}$ is given by
\begin{equation}\label{chrbox}F_{W^{G(n)}}(w)=1-\prod_{k=1}^n(1/r_k)\mathbb{E}\bigg[W\bigg(1-F_{V_n}\bigg(\frac{w}{W}\bigg)\bigg)\mathbf{1}(W\geq w)\bigg].
\end{equation}
In particular, if the $r_k$ are all distinct, we have
\begin{equation}\label{zxdistu}F_{W^{G(n)}}(w)=\mathbb{E}\bigg[W\bigg[1-\sum_{k=1}^n\frac{1}{r_k}\bigg(\prod_{j\not=k}^n\frac{1}{r_j-r_k}\bigg)\bigg(\frac{w}{W}\bigg)^{r_k}\bigg]\mathbf{1}(W\geq w)\bigg].
\end{equation}
If $r_1=\cdots=r_n=r$ the distribution function of $W^{G(n)}$ can be written as
\begin{equation}\label{zxasdf}F_{W^{G(n)}}(w)=1-\frac{1}{(n-1)!r^n}\mathbb{E}\bigg[W\gamma\bigg(n,r\log\bigg(\frac{W}{w}\bigg)\bigg)\mathbf{1}(W\geq w)\bigg],
\end{equation}
where $\gamma(a,x)=\int_0^xt^{a-1}\mathrm{e}^{-t}\,\mathrm{d}t$.  

(ii) The density function of $W^{G(n)}$ is given by
\begin{equation}\label{chrboxp}p_{W^{G(n)}}(w)=\mathbb{E}\bigg[G_{n,n}^{n,0}\bigg( \frac{w}{W}\; \bigg| \; \begin{matrix} r_1,\ldots, r_n \\
r_1-1,\ldots,r_n-1 \end{matrix} \bigg)\mathbf{1}(W\geq w)\bigg].
\end{equation}
If the $r_k$ are distinct, we have
\begin{equation}\label{chrbox2}p_{W^{G(n)}}(w)=\mathbb{E}\bigg[\sum_{k=1}^n\bigg(\prod_{j\not=k}^n\frac{1}{r_j-r_k}\bigg)\bigg(\frac{w}{W}\bigg)^{r_k-1}\mathbf{1}(W\geq w)\bigg].
\end{equation}
If $r_1=\cdots=r_n=r$ the density function of $W^{G(n)}$ is given by
\begin{equation}\label{zxzxcv}p_{W^{G(n)}}(w)=\frac{1}{(n-1)!}\mathbb{E}\bigg[\bigg(\frac{w}{W}\bigg)^{r-1}\bigg(\log\bigg(\frac{W}{w}\bigg)\bigg)^{n-1}\mathbf{1}(W\geq w)\bigg].
\end{equation}  
\end{proposition}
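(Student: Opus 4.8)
The plan is to read every formula off the representation $W^{G(n)}\stackrel{\mathcal{D}}{=}V_nW^s$ established in Proposition \ref{zxdoublesquareg}, in which $W^s$ is the $W$-size biased variable, $V_n=\prod_{k=1}^nU_k$ with the $U_k\sim\mathrm{Beta}(r_k,1)$ mutually independent, and $V_n$ is independent of $W^s$. Since $V_n\in(0,1)$ almost surely, conditioning on $W^s$ gives, for $w>0$,
\[
F_{W^{G(n)}}(w)=\mathbb{P}(W^s\le w)+\mathbb{E}\bigl[F_{V_n}(w/W^s)\mathbf{1}(W^s>w)\bigr],
\]
which, since $F_{V_n}(x)=1$ for $x\ge1$, may be rewritten as $1-\mathbb{E}[(1-F_{V_n}(w/W^s))\mathbf{1}(W^s>w)]$. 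The integrand here is bounded, so the defining identity $\mathbb{E}g(W^s)=(\mathbb{E}W)^{-1}\mathbb{E}[Wg(W)]$ of the size bias transformation (valid first for $g\in C_c$, and then for bounded measurable $g$ by a routine approximation argument) applies with $g(y)=(1-F_{V_n}(w/y))\mathbf{1}(y>w)$; recalling $\mathbb{E}W=\prod_{k=1}^nr_k$, this is precisely (\ref{chrbox}). Moreover $x\mapsto1-F_{V_n}(x)$ vanishes at $x=1$, so $\mathbf{1}(W>w)$ may be replaced by $\mathbf{1}(W\ge w)$ without changing anything.

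For part (ii) I would differentiate the conditional distribution function. For fixed $y>0$, the map $w\mapsto\mathbf{1}(y\le w)+F_{V_n}(w/y)\mathbf{1}(y>w)$ equals $F_{V_n}(w/y)$ on $(0,y)$, equals $1$ on $(y,\infty)$, and is continuous at $w=y$ because $F_{V_n}(1^-)=1$; hence its derivative in $w$ is $y^{-1}p_{V_n}(w/y)\mathbf{1}(0<w<y)$. Differentiating under the expectation (justified by continuity of $p_{V_n}$ on $(0,1)$ together with a domination bound) and then applying the size bias identity to $g(y)=y^{-1}p_{V_n}(w/y)\mathbf{1}(y>w)$ yields $p_{W^{G(n)}}(w)=\bigl(\prod_{k}r_k\bigr)^{-1}\mathbb{E}[p_{V_n}(w/W)\mathbf{1}(W\ge w)]$. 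Substituting the Meijer $G$-function form (\ref{pdfprodbetass}) of $p_{V_n}$ from Lemma \ref{zxurklem} cancels the prefactor $\prod_kr_k$ and gives (\ref{chrboxp}). Equivalently, one can bypass differentiation altogether and verify directly, by Tonelli, that integrating the right-hand side of (\ref{chrboxp}) over $(0,w)$ reproduces the right-hand side of (\ref{chrbox}), using $\int_0^xp_{V_n}(s)\,\mathrm{d}s=F_{V_n}(x)$.

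The three corollaries then follow by inserting the appropriate explicit expressions for $F_{V_n}$ and $p_{V_n}$ into (\ref{chrbox}) and (\ref{chrboxp}). For distinct $r_k$ this is immediate from (\ref{zxpdfurk}) and (\ref{zxcdfurk}), producing (\ref{zxdistu}) and (\ref{chrbox2}). For $r_1=\cdots=r_n=r$, where Lemma \ref{zxurklem} does not apply directly, I would first observe that $-\log U_k\sim\mathrm{Exp}(r)$ when $U_k\sim\mathrm{Beta}(r,1)$, so $-\log V_n\sim\mathrm{Gamma}(n,r)$; this gives $p_{V_n}(x)=\frac{r^n}{(n-1)!}x^{r-1}(\log(1/x))^{n-1}$ and $F_{V_n}(x)=\frac{1}{(n-1)!}\Gamma\bigl(n,r\log(1/x)\bigr)$ on $(0,1)$, and hence $1-F_{V_n}(w/W)=\frac{1}{(n-1)!}\gamma\bigl(n,r\log(W/w)\bigr)$. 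Substituting into (\ref{chrbox}) and (\ref{chrboxp}) with $\mathbb{E}W=r^n$ gives (\ref{zxasdf}) and (\ref{zxzxcv}). I expect the only genuine care to be needed in the measure-theoretic steps — extending the size bias identity to indicator-type test functions, and either differentiating under the expectation or invoking Tonelli — and in the harmless boundary behaviour at $W=w$; there is no deeper obstacle, since the substantive content has already been packaged into Proposition \ref{zxdoublesquareg} and Lemma \ref{zxurklem}.
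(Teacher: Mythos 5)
Your argument is essentially the paper's own: the paper takes the identity $\mathbb{E}f(W^{G(n)})=\prod_{k=1}^n(1/r_k)\,\mathbb{E}[Wf(V_nW)]$ from the proof of Proposition \ref{zxdoublesquareg} with $f(x)=\mathbf{1}(x\le w)$, conditions on $W$ to get (\ref{chrbox}), differentiates in $w$ and inserts (\ref{pdfprodbetass}) for (\ref{chrboxp}), and handles the equal-parameter case via $-\log V_n\sim\mathrm{Gamma}(n,r)$ exactly as you do; your detour through $W^s$ and the size-bias identity is just a repackaging of that same identity. So the proposal is correct and follows essentially the same route as the paper.
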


\begin{proof}(i) In the proof of Proposition \ref{zxdoublesquareg} we showed that $\mathbb{E}f(W^{G(n)})=\prod_{k=1}^n(1/r_k)\mathbb{E}Wf(V_nW)$ for all bounded functions $f$.  By taking $f(x)=\mathbf{1}(x\leq w)$ we have
\begin{equation}\label{zxiiip}F_{W^{G(n)}}(w)=\prod_{k=1}^n(1/r_k)\mathbb{E}[W\mathbf{1}(V_nW\leq w)]=1-\prod_{k=1}^n(1/r_k)\mathbb{E}[W\mathbf{1}(V_nW\geq w)],
\end{equation}
as $\mathbb{E}W=\prod_{k=1}^nr_k$.   Formula (\ref{chrbox}) now follows.  If the $r_k$ are distinct, then, from formula (\ref{zxcdfurk}) for the distribution function of $V_n$ and (\ref{chrbox}), we deduce formula (\ref{zxdistu}).

Suppose now that $r_1=\cdots r_n=r$.  It is straightforward to verify that $-\log(U_k)$ follows the $\mathrm{Exp}(r)$ distribution.  Hence, $-\log(V_r)$ follows the $\mathrm{Gamma}(n,r)$ distribution, and thus
\begin{equation*}F_{W^{G(n)}}(w)=\frac{1}{r^n}\mathbb{E}\bigg[W\int_0^{-\log(\frac{w}{W})}\frac{r^n}{(n-1)!}t^{n-1}\mathrm{e}^{-rt}\,\mathrm{d}t\mathbf{1}(W\geq w)\bigg].
\end{equation*}
Making the change of variables $u=rt$ gives
\begin{equation*}\int_0^{-\log(\frac{w}{W})}t^{n-1}\mathrm{e}^{-rt}\,\mathrm{d}t=\frac{1}{r^n}\int_0^{-r\log(\frac{w}{W})}u^{n-1}\mathrm{e}^{-u}\,\mathrm{d}u=\frac{1}{r^n}\gamma\bigg(n,r\log\bigg(\frac{W}{w}\bigg)\bigg),
\end{equation*}
and formula (\ref{zxasdf}) now follows.

(ii) The general formula follows from differentiating the right-hand side of (\ref{chrbox}) with respect to $w$, and then applying formula (\ref{pdfprodbetass}) for the density of $V_n$.  Formula (\ref{chrbox2}) follows from substituting the formula (\ref{zxpdfurk}) for the density of $V_n$ into (\ref{chrboxp}).  Finally, we consider the case $r_1=\cdots=r_n=r$.  For $a>0$,  the function $\gamma(n,r\log(a/w))$ is differentiable on $(0,a)$, with derivative
\begin{equation}\label{zxqwert}\frac{\mathrm{d}}{\mathrm{d}w}\bigg[\gamma\bigg(n,r\log\bigg(\frac{a}{w}\bigg)\bigg)\bigg]=-\frac{r^n}{w}\bigg(\log\bigg(\frac{a}{w}\bigg)\bigg)^{n-1}\bigg(\frac{w}{a}\bigg)^r.
\end{equation} 
Using (\ref{zxqwert}) and dominated convergence now yields formula (\ref{zxzxcv}).
\end{proof}

\section{Proof of Theorem \ref{thmmeijerg} and concluding remarks}

\subsection{Proof of Theorem \ref{thmmeijerg}}
Let us first consider the general case $a_1,\ldots,a_n>-1$. For ease of notation, let $r_j=a_j+1$ for $j=1,\ldots,n$.  Let $W\sim\mathrm{PG}(r_1,\ldots,r_n,1)$, which has  density
\begin{equation}\label{MeijerGamma1} p(x)= K G_{0,n}^{n,0}( x \, | \, r_1-1,\ldots, r_n-1), \quad x>0,
\end{equation}
where $K=\prod_{k=1}^n(1/r_k)$.  From formula (\ref{chrboxp}), we have that the density of $W$-gamma biased distribution of order $n$ with shape parameters $r_1,\ldots,r_n$ is given by
\begin{equation}\label{gameqn33}p_{W^{G(n)}}(x)=K\int_{x}^\infty G_{n,n}^{n,0}\bigg( \frac{x}{t}\; \bigg| \; \begin{matrix} r_1,\ldots, r_n \\
r_1-1,\ldots,r_n-1 \end{matrix} \bigg)G_{0,n}^{n,0}( x \, | \, r_1-1,\ldots, r_n-1)\,\mathrm{d}t, \quad x>0.
\end{equation}
But, by Lemma \ref{lemma1}, the $\mathrm{PG}(r_1,\ldots,r_n,1)$ distribution is the unique fixed point of the $W$-gamma biased distribution of order $n$ with shape parameters $r_1,\ldots,r_n$.  Thus, (\ref{MeijerGamma1}) and (\ref{gameqn33}) are equal for all $x>0$, from which we deduce formula (\ref{meigint1}).  The formulas for the special cases of distinct $a_1,\ldots,a_n$ and $a_1=\cdots=a_n=a$ following similarly, with the difference being that we apply formulas (\ref{chrbox2}) and (\ref{zxzxcv}) instead of (\ref{chrboxp}). \hfill $\Box$

\subsection{Discussion}

The approach used in this paper to obtain the integral formulas of Theorem \ref{thmmeijerg} could also be used to arrive at integral formulas for other special functions.  The first step would be to obtain an appropriate Stein characterisation of a probability distribution $P$, whose probability density function is given in terms of special functions.  An associated distributional transformation would then have to be obtained that contains $P$ as a fixed point.  Finally, a formula for the density of the distributional transformation of $P$ would then need to be obtained, from which we would deduce an integral formula involving special functions.   

For example, the $\mathrm{PN}(n,1)$ characterisation (\ref{cracker}) and the zero bias transformation of order $n$ could be used together to obtain integral formulas involving the Meijer $G$-function.  However, doing this just leads to a formula that is equivalent to (\ref{lastone}) with $a=0$, and reduces to it after a simple change of variables.  This is essentially due to the fact that the $\Gamma(\frac{1}{2},\frac{1}{2})$ distribution, the chi-square distribution with one degree of freedom, has the same distribution as the square of a standard normal random variable.

\section*{Acknowledgements} The author is supported by EPSRC grant EP/K032402/1.  The author would like to thank the reviewer for carefully reading the manuscript and for their useful suggestions.

\end{document}